\def\5n{\negthinspace \negthinspace \negthinspace \negthinspace \negthinspace }
\def\4n{\negthinspace \negthinspace \negthinspace \negthinspace }
\def\3n{\negthinspace \negthinspace \negthinspace }
\def\2n{\negthinspace \negthinspace }
\def\1n{\negthinspace }
      \def\ba{\bar{a}}
   \def\cB{{\cal B}}
\def\dbE{\mathbb{E}}     
\def\dbF{\mathbb{F}} \def\sF{\mathscr{F}}      
\def\dbH{\mathbb{H}}
\def\dbP{\mathbb{P}}     
\def\dbQ{\mathbb{Q}}     
\def\dbR{\mathbb{R}}     
 \def\sT{\mathscr{T}}
     \def\bY{\bar{Y}} \def\hY{{\hat Y}}  \def\by{\bar{y}} \def\hy{{\hat y}}
   \def\cZ{{\cal Z}}  \def\bZ{\bar{Z}} \def\hZ{{\hat Z}}  \def\bz{\bar{z}} \def\hz{{\hat z}}
\def\Om{\Omega}           
\def\ss{\smallskip}                
\def\ms{\medskip}                
\def\ds{\displaystyle}
\def\no{\noindent}        \def\q{\quad}                      
\def\ns{\noalign{\ss}}    \def\qq{\qquad}                    
    \def\hb{\hbox}                     
         \def\rf{\eqref}                    
  \def\deq{\triangleq}               
\def\tb{\textcolor{blue}}            \def\({\Big (}
\def\les{\leqslant}                  \def\){\Big )}
\def\ges{\geqslant}       \def\esssup{\mathop{\rm esssup}}   \def\[{\Big[}
           \def\]{\Big]}
                   \def\cd{\cdot}
        \def\ts{\times}  \def\leq{\leqslant}
        \def\G{\Gamma}         \def\o{\omega}
\def\b{\beta}            \def\d{\delta}        
           \def\th{\theta}    
\def\e{\varepsilon}   \def\L{\Lambda}  \def\l{\lambda}        
    \def\t{\tau}     \def\f{\varphi}  \def\i{\infty}   
\def\cd{\cdot}        
\def\bde{\begin{definition}\label}    \def\ede{\end{definition}}
\def\be{\begin{equation}}
\def\bel{\begin{equation}\label}      \def\ee{\end{equation}}
\def\bt{\begin{theorem}\label}        \def\et{\end{theorem}}
\def\bc{\begin{corollary}\label}      \def\ec{\end{corollary}}
\def\bl{\begin{lemma}\label}          \def\el{\end{lemma}}
\def\bp{\begin{proposition}\label}    \def\ep{\end{proposition}}
\def\bas{\begin{assumption}\label}    \def\eas{\end{assumption}}
\def\br{\begin{remark}\label}         \def\er{\end{remark}}
\def\bex{\begin{example}\label}       \def\ex{\end{example}}
\def\ba{\begin{array}}                \def\ea{\end{array}}
\def\ben{\begin{enumerate}}           \def\een{\end{enumerate}}
\newtheorem{theorem}{Theorem}[section]
\newtheorem{definition}[theorem]{Definition}
\newtheorem{proposition}[theorem]{Proposition}
\newtheorem{corollary}[theorem]{Corollary}
\newtheorem{lemma}[theorem]{Lemma}
\newtheorem{assumption}[theorem]{Assumption}
\newtheorem{remark}[theorem]{Remark}
\newtheorem{example}[theorem]{Example}
\begin{document}

\title{\bf Solvability of a class of mean-field BSDEs with quadratic growth}

\author{
Tao Hao\thanks{School of Statistics, Shandong University of Finance and Economics, Jinan 250014, China (Email: {\tt taohao@sdufe.edu.cn}).
This author is partially supported by National Natural Science Foundation of China
(Grant No. 11871037), Natural Science Foundation of Shandong Province (Grant No. ZR2020MA032),
 and the Colleges and Universities Youth Innovation Technology Program of Shandong Province (Grant No. 2019KJI011).}~,~~~
Jiaqiang Wen\thanks{Corresponding author. Department of Mathematics and SUSTech International Center for Mathematics, Southern University of Science and Technology,
Shenzhen, Guangdong, 518055, China (Email: {\tt wenjq@sustech.edu.cn}).
This author is supported by National Natural Science Foundation of China (grant No. 12101291) and
Guangdong Basic and Applied Basic Research Foundation (grant No. 2022A1515012017).}~,~~~
Jie Xiong\thanks{Department of Mathematics and SUSTech International center for Mathematics, Southern University of Science and Technology, Shenzhen 518055, Guangdong, China {\tt (xiongj@sustech.edu.cn)}. This author is supported by Southern University of Science and Technology Start up fund Y01286120 and the National Natural Science Foundation of China (Grants Nos. 61873325, 11831010).} }

\date{}
\maketitle

\no\bf Abstract: \rm
In this paper, we study the multi-dimensional mean-field backward stochastic differential equations (BSDEs, for short) with quadratic growth.
Under small terminal value, the existence and uniqueness are proved for the multi-dimensional situation when the generator $f(t,Y,\dbE[Y],Z,\dbE[Z])$ is of quadratic growth with respect to the last four items, using some new methods. Besides, a kind of comparison theorem is obtained.

\ms

\no\bf Key words: \rm mean-field backward stochastic differential equation, backward stochastic
differential equation, quadratic growth.

\ms

\no\bf AMS subject classifications. \rm 60H10, 60H30

\section{Introduction}

Mean-field stochastic differential equations (SDEs, for short), also called McKean-Vlasov equations, can be traced back to the works of Kac \cite{Kac-1956} in the 1950s.
Recently, mean-field backward stochastic differential equations (BSDEs, for short) were introduced by  Buckdahn, Djehiche, Li and Peng \cite{BDLS-09-AP} and Buckdahn, Li and Peng \cite{Buckdahn2}, owing to that mathematical mean-field approaches play an important role in many fields, such as economics, physics, and game theory (see Lasry and Lions \cite{Lasry}). Since then, mean-field BSDEs have received intensive attention.
In order to present the work more clearly, we describe the problem in detail.

\ms

Assume that $\{W_t;0\les t<\i\}$ is a $d$-dimensional standard Brownian motion defined on a complete filtered probability space $(\Om,\sF,\dbF,\dbP)$, where $\dbF=\{\sF_t; 0\les t<\i\}$ is the filtration generated by $W$ and augmented by all the $\dbP$-null sets in $\sF$.
Consider the following mean-field backward stochastic differential equations over a finite horizon $[0,T]$:
\bel{BSDE}\left\{\2n\ba{ll}
\ds -dY_t =f(t,Y_t,\dbE[Y_{t}],Z_t,\dbE[Z_{t}])dt-Z_tdW_t,\q~t\in[0,T];\\
\ns\ns\ds Y_T=\xi,\ea\right.\ee
where the random variable $\xi$ is called the {\it terminal value} and the mapping $f(\cd)$ is called the {\it generator}. The pair of unknown processes $(Y,Z)$, called an adapted solution of $(\ref{BSDE})$, are  $\dbF$-adapted with values in $\dbR^m\ts\dbR^{m\ts d}$.
For convenience, hereafter, by {\it quadratic} mean-field BSDEs, or mean-field BSDEs with {\it quadratic growth}, we mean that in  \rf{BSDE}, the generator $f(\cd)$ grows in $Z$ quadratically.
Meanwhile, we call $\xi$ the {\it bounded terminal value} if it is bounded. In addition,
we call $\xi$ the {\it small terminal value}, if there exists a small positive constant $\e$ such that $\|\xi\|_\i\les \e$.

\ms

As a natural extension of BSDEs (see below for precise description),
the mean-field BSDEs \rf{BSDE} were introduced by Buckdahn et al. \cite{BDLS-09-AP,Buckdahn2}, where they established the existence and uniqueness of adapted solutions under the condition that $f(\cd)$ is uniformly Lipschitz in the last four arguments.
From then on, the theory and applications of mean-field BSDEs have been developed significantly.
For example, Carmona and Delarue  provided a detailed probabilistic analysis of the optimal control of nonlinear stochastic dynamical systems of the McKean-Vlasov type in \cite{Carmona-Delarue-AP2015}, and
studied some special class of quadratic forward-backward stochastic differential equations (FBSDEs, for short) of mean-field type in \cite{CD18}.
Buckdahn et al. \cite{Buckdahn2017} studied the general mean-field stochastic differential equations and their relation with the associated PDEs.
Zhang, Sun and Xiong \cite{Zhang-Sun-Xiong-SICON2018} obtained a general stochastic maximum principle for a Markov regime switching jump-diffusion model of mean-field type.
Briand,  Elie and Hu \cite{Hu-AAP2018} introduced and studied the BSDEs with mean reflection.
Li, Sun and Xiong \cite{Li-Sun-Xiong-AMO2019} established the results for linear quadratic optimal control problems for mean-field backward stochastic differential equations.
Douissi, Wen and Shi \cite{Douissi-Wen-Shi 2019} and Shi, Wen and Xiong \cite{Shi-Wen-Xiong 2021} studied the optimal control problem of mean-field forward-backward stochastic systems driven  by fractional Brownian motion.

\ms

Now, we recall the following nonlinear BSDEs:
\bel{BSDE1}\left\{\2n\ba{ll}
\ds -dY_t =f(t,Y_t,Z_t)dt-Z_tdW_t,\q~t\in[0,T];\\
\ns\ns\ds Y_T=\xi.\ea\right.\ee
When the generator $f(\cd)$ is of linear growth with respect to $(Y,Z)$, the existence and uniqueness of \rf{BSDE1} were firstly proved by Pardoux and Peng \cite{Pardoux-Peng-90}. Since then, a lot of researchers have found that BSDEs have important applications in mathematical finance, stochastic optimal control and partial differential equation (see \cite{Karoui-Peng-Quenez-97,Pardoux-Peng-92,Yong-Zhou-99}, to mention a few).
Meanwhile, owing to many important applications, a lot of efforts have been made to relax the conditions on the generator $f(\cd)$ of \rf{BSDE1} with respect to $Z$. For example,
Kobylanski \cite{Kobylanski00} proved the existence and uniqueness of one-dimensional BSDE with bounded terminal condition and with $f(\cd)$ growing quadratically in $Z$.
The well-posedness of one-dimensional quadratic BSDE with unbounded terminal value was obtained by
Briand and Hu \cite{Briand-Hu-06,Briand-Hu-08}.
Hibon et al. \cite{HHLLW-MCRF2018} studied a class of quadratic BSDEs with mean reflection. Hu, Li and Wen \cite{Hu-Li-Wen-JDE2021} obtained the existence and uniqueness of anticipated BSDEs with quadratic growth.
Some other recent developments of quadratic BSDEs can be found in Bahlali, Eddahbi and Ouknine \cite{Bahali-Eddahbi-Ouknine-17}, Barrieu and El Karoui \cite{Briand-Karoui-13}, Cheridito and Nam \cite{Cheridito-Nam-14}, Hibon, Hu and Tang \cite{Hibon-Hu-Tang17}, Hu and Tang \cite{Hu-Tang-16}, Tevzadze \cite{Tevzadze-08},
 Xing and Zitkovic \cite{Xing-Zitkovic}, and references cited therein.

\ms

Let us introduce a motivation to study (\ref{BSDE}) at the particles level. Consider the following particle system with $N$ particles:
$$ Y^i_t=\xi^i
+\int_t^Tf^i\Big(s,Y^i_s,\frac{1}{N}\sum\limits_{i=1}^NY^i_s,Z^{ii}_s,\frac{1}{N}\sum\limits_{i=1}^NZ^{ii}_s\Big)ds
-\sum\limits_{j=1}^N\int_t^TZ^{ij}_sdW^j_s,\q~ t\in[0,T], $$
where for each $i,j=1,...,N$, $Z^{ij}$ is an $m\times d$ matrix;
$\{W^j;1\leq j\leq N\}$ are $N$ independent $d$-dimensional Brownian motions;
$\xi^i$ and $f^i$ are $N$ independent copies of $\xi$ and $f$, respectively.
Following Lions's idea and the law of large numbers, we conjecture that when $N$ tends to $\infty$,  the mean-field limit of solutions of the above particle system corresponds to that of the mean-field BSDE \rf{BSDE}.

\ms

On the other hand, in the last two decades, stimulated by the broad applications and the open problem proposed by Peng \cite{Peng-98}, a lot of efforts have been made to relax the conditions on the generator $f$ of the mean-field BSDEs.
Hibon, Hu and Tang \cite{Hibon-Hu-Tang17} considered quadratic mean-field BSDEs in one-dimensional situation when the generator $f$ depends on the expectation of $(Y,Z)$, and studied the existence and uniqueness of related equations.
However, the mean-field BSDEs \rf{BSDE} with quadratic growth in the multi-dimensional situation is still open.
In this paper, along with the work of \cite{Hibon-Hu-Tang17},
we study the solvability of mean-field BSDEs \rf{BSDE} with quadratic growth in the multi-dimensional situation.
First, borrowing some ideas from Tevzadze \cite{Tevzadze-08}, we construct an artful method to prove that, under small terminal value, the mean-field BSDE \rf{BSDE} admits a unique adapted solution in the multi-dimensional situation. It should be pointed out that, besides the multidimensional situation, our contribution also includes the generator $f(\cd)$ is of quadratic growth with respect to all items of $(Y,\dbE[Y],Z,\dbE[Z])$.
Then, a comparison theorem for such equations is obtained for the one-dimensional situation.

\ms

This article is organized as follows.
Some preliminaries are presented in Section 2.
The existence and uniqueness of multi-dimensional quadratic mean-field BSDEs \rf{BSDE}  with small terminal value are proved by the fixed point argument in Section 3. A comparison theorem for such BSDEs is given in Section 4.

\section{Preliminaries}

Recall that $\{W_t\;;0\les t<\i\}$ is a $d$-dimensional standard Brownian motion defined on
the complete filtered probability space $(\Om,\sF,\dbF,\dbP)$, where $\dbF=\{\sF_t\}_{t\ges0}$ is the filtration generate generated by $W$ and augmented by all the $\dbP$-null sets in $\sF$.
We denote by $\dbR^{m\ts d}$ the space of the $m\ts d$-matrix $C$ with Euclidean norm $|C|=\sqrt{tr(CC^*)}$.
In the following, for Euclidean space $\dbH$ and $t\in[0,T]$, we denote
$$\ba{ll}
\ns\ds L^\i_{\sF_t}(\Om;\dbH)=\Big\{\th:\Om\to\dbH\bigm|\th\hb{ is $\sF_t$-measurable, }
\|\th\|_{\i}\triangleq\esssup_{\o\in\Om}|\th(\omega)|<\i\Big\},\\
\ea$$
$$\ba{ll}
\ns\ds L_\dbF^2(t,T;\dbH)\1n=\1n\Big\{\f:[t,T]\1n\times\1n\Om\to\dbH\bigm|\f\hb{ is
$\dbF$-progressively measurable, }\\
\ns\ds\qq\qq\qq\qq\qq\qq\q~
\|\f\|_{L_\dbF^2(t,T)}\deq\1n\(\dbE\int^T_t\1n|\f_s|^2ds\)^{1\over2}\1n<\2n\i\Big\},\\
\ns\ds L_\dbF^\infty(t,T;\dbH)=\Big\{\f:[t,T]\times\Om\to\dbH\bigm|\f\hb{ is
$\dbF$-progressively measurable, }\\
\ns\ds\qq\qq\qq\qq\qq\qq\q~
\|\f\|_{L_\dbF^\infty(t,T)}\deq\esssup_{(s,\o)\in[t,T]\times\Om}|\f_s(\o)|<\i\Big\},\\
\ns\ds S_\dbF^2(t,T;\dbH)=\Big\{\f:[t,T]\times\Om\to\dbH\bigm|\f\hb{ is
$\dbF$-adapted, continuous, }\\
\ns\ds\qq\qq\qq\qq\qq\qq\q~
\|\f\|_{S_\dbF^2(t,T)}\deq\Big\{\dbE\(\sup_{s\in[t,T]}|\f_s|^2\)\Big\}^{\frac{1}{2}}<\i\Big\}.\ea$$
Let $M=(M_t,\sF_t)$ be a uniformly integrable martingale with $M_0=0$, and for $p\in[1,\i)$, we set
$$\|M\|_{BMO_p(\dbP)}\deq
\sup_\t\bigg\|\dbE_\t\Big[\big(\langle M\rangle_{\t}^{\i}\big)^{\frac{p}{2}}\Big]^{\frac{1}{p}}\bigg\|_{\i},$$
where the supremum is taken over all $\dbF$-stopping times $\t$, and $\dbE_\t$ is the conditional expectation given $\sF_{\t}$. The class $\big\{M: \|M\|_{BMO_p(\dbP)}<\i\big\}$ is denoted by $BMO_p(\dbP)$. Observe that $\| \cd \|_{BMO_p}$ is a norm on this space and $BMO_p(\dbP)$ is a Banach space. In the sequel, we denote $BMO(\dbP)$ the space of $BMO_2(\dbP)$ for simplicity. Next, for any $Z\in L^2_\dbF(0,T;\dbH)$, by Burkholder-Davis-Gundy's inequalities, one has
$$\ba{ll}
\ns\ds c_2\dbE_\t\[\(\int_\t^T|Z_s|^2ds\)\]\les\dbE_\t\[\sup_{t\in[\t,T]}
\Big|\int_\t^tZ_sdW_s\Big|^2\]\les C_2\dbE_\t\[\(\int_\t^T|Z_s|^2ds\)\],\ea$$
for some constants $c_2,C_2>0$. Thus,
$$\ba{ll}
\ns\ds c_2\sup_{\t\in\sT[t,T]}\Big\|\dbE_\t\[\(\int_\t^T|Z_s|^2ds\)\]\Big\|_\i
\les\sup_{\t\in\sT[t,T]}\Big\|\dbE_\t\[\sup_{t\in[\t,T]}\Big|\int_\t^tZ_sdW_s\Big|^2\]\Big\|_\infty\\
\ns\ds\qq\qq\qq\qq\qq\qq\qq\q \les C_2\sup_{\t\in\sT[t,T]}\Big\|\dbE_\t\[\(\int_\t^T|Z_s|^2ds\)\]\Big\|_\i,\ea$$
where $\sT[t,T]$ denotes the set of all $\dbF$-stopping times $\t$ valued in $[t,T]$. Note that the above quantities could be infinite. Therefore, we introduce the following:
$$\cZ^2[t,T]=\Big\{Z\in L^2_\dbF(t,T;\dbH)\Bigm|\|Z\|_{\cZ^2[t,T]}\equiv\sup_{\t\in\sT[t,T]}\Big\|
\dbE_\t\[\int_\t^T|Z_s|^2ds\] \Big\|_\i^{1\over2}<\i\Big\}.$$
Recall that for $Z\in\cZ^2[0,T]$, the process $s\mapsto\int_0^sZ_rdW_r$ (denoted by $Z\cd W$), $s\in[0,T]$, is a {\it BMO-martingale}. Moreover, note that on $[0,T]$, $Z\cd W$ belongs to $BMO(\dbP)$ if and only if $Z\in \cZ^2[0,T]$, that is,
$$\|Z\cd W\|^2_{BMO(\dbP)}\equiv \|Z\|^2_{\cZ^2[0,T]}.$$
\begin{definition}\label{8.11.21.0} \rm
A pair of processes $(Y,Z)\in S^2_{\dbF}(0,T;\dbR^m)\times
L^2_\dbF(0,T;\dbR^{m\ts d})$ is called an {\it adapted solution} of BSDE \rf{BSDE}, if $\dbP$-almost surely, it satisfies \rf{BSDE}. Moreover, if $(Y,Z)\in L^\infty_\dbF(0,T;\dbR^m)\times\cZ^2[0,T]$, it is called a bounded adapted solution.
\end{definition}

\section{Existence and Uniqueness}

In this section, we study multi-dimensional mean-field BSDEs with quadratic growth and small terminal value.
By the theory of ordinary differential equations, we see that if the generator $f(\cd)$ is super-linear with respect to $Y$, then the equations may not have global solutions. However, we pointed out that, under small bounded value, the generator could be of quadratic growth with respect to $Y$ and $Z$ (see Hu, Li and Wen \cite{Hu-Li-Wen-JDE2021}).
Now let us introduce the following hypothesis.

\begin{assumption}\label{3.1} \rm
Let $C$ be a positive constant. For all
$s\in[0,T]$, $y_i,\bar{y}_i\in\dbR^m,z_i,\bar{z}_i\in\dbR^{m\ts d}$ with $i=1,2$,  $f(s,0,0,0,0)$ is bounded and
\begin{align*}
  \big|f(s,y_1,\bar y_1,z_1,\bar z_1)-f(s,y_2,\bar y_2,z_2,\bar z_2)\big|
\les& C\Big(|y_1|+|\bar{y}_1|+|y_2|+|\bar{y}_2|+|z_1|+|\bar{z}_1|+|z_2|+|\bar{z}_2|\Big)\\
 &\cd\Big(|y_1-y_2|+|\bar y_1-\bar y_2|+|z_1-z_2|+|\bar z_1-\bar z_2|\Big).
\end{align*}
\end{assumption}

\begin{example} \rm
Assumption \ref{3.1} implies that the generator $f(\cd)$ could be of quadratic growth with respect to the last four arguments. The following generator
$$f(s,y_1,y_2,z_1,z_2)=y^2+\bar y^2+z^2+\bar z^2,\qq s\in[0,T], y, \bar y\in\dbR^m, z, \bar z\in\dbR^{m\ts d}$$
satisfies such an assumption.
\end{example}

In the following, we state and prove the main result of this section, which establishes the existence and uniqueness of multi-dimensional BSDE \rf{BSDE} with quadratic growth and small terminal value.

\begin{theorem}\label{Theorem1}
Under the Assumption \ref{3.1}, for any bounded $\xi\in L^\i_{\sF_T}(\Om;\dbR^m)$ satisfying
\bel{20.7.3.1}\|\xi\|_{\infty}
+\bigg\|\int_0^T|f(t,0,0,0,0)|dt\bigg\|_{\i}\les \rho,\ee
where
$$\rho^2 =\frac{1}{4097C^2(T^2+1)},$$
we have that mean-field BSDE \rf{BSDE} admits a unique adapted solution $(Y,Z)$ in $\cB_\rho$, where
$$\cB_\rho\deq\Big\{(Y,Z)\in L_\dbF^\infty(0,T;\dbR^m)\times \cZ^2[0,T]\Bigm|
\|Y\|^2_{L_\dbF^\infty(0,T)}+\|Z\|_{\cZ^2[0,T]}^2\les \rho^2\Big\}.$$
\end{theorem}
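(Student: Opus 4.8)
The plan is to set up a fixed-point argument on the ball $\cB_\rho$ using the Banach contraction principle. Given $(U,V)\in\cB_\rho$, define $(Y,Z)$ as the solution of the \emph{linear-coefficient-free} BSDE
$$-dY_t = f(t,U_t,\dbE[U_t],V_t,\dbE[V_t])\,dt - Z_t\,dW_t,\qquad Y_T=\xi,$$
i.e.\ simply $Y_t = \dbE_t\big[\xi+\int_t^T f(s,U_s,\dbE[U_s],V_s,\dbE[V_s])\,ds\big]$ with $Z$ obtained from the martingale representation theorem applied to the martingale $M_t = \dbE_t[\xi+\int_0^T f(s,U_s,\dbE[U_s],V_s,\dbE[V_s])\,ds]$. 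Because $f$ has the product structure in Assumption \ref{3.1}, the driver evaluated along $(U,V)$ is in $L^\infty_\dbF(0,T;\dbR^m)$ once $(U,V)\in\cB_\rho$: indeed $|f(s,U_s,\dbE[U_s],V_s,\dbE[V_s])|\le |f(s,0,0,0,0)| + 2C(|U_s|+|\dbE[U_s]|+|V_s|+|\dbE[V_s]|)^2$, and Jensen controls the $\dbE[\cdot]$ terms by the $L^\infty$, resp.\ $\cZ^2$, norms. So the map $\Phi:(U,V)\mapsto(Y,Z)$ is well defined as a map from $\cB_\rho$ into $L^\infty_\dbF(0,T;\dbR^m)\times\cZ^2[0,T]$.

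The first main step is the \emph{self-map} property: $\Phi(\cB_\rho)\subseteq\cB_\rho$. For the $Y$-bound, take conditional expectation and use
$\|Y\|_{L^\infty_\dbF(0,T)} \le \|\xi\|_\infty + \big\|\int_0^T|f(s,0,0,0,0)|ds\big\|_\infty + C\,T\cdot(\text{const})\cdot\rho^2$,
where the $\rho^2$ comes from bounding the quadratic-in-$(U,V)$ remainder using $\|U\|_{L^\infty}^2\le\rho^2$ and $\sup_\t\|\dbE_\t\int_\t^T|V_s|^2ds\|_\infty=\|V\|_{\cZ^2}^2\le\rho^2$ (the latter handles the $\int_0^T|V_s|^2ds$ piece via the BMO/$\cZ^2$ estimate from the preliminaries). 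For the $Z$-bound, apply $\dbE_\t$ to $|Y_\t|^2 = |Y_T|^2 + \ldots$ after Itô — equivalently, from $\dbE_\t\int_\t^T|Z_s|^2ds = \dbE_\t[|\xi + \int_\t^T f\,ds|^2] - |Y_\t|^2$ one gets $\|Z\|_{\cZ^2[0,T]}^2 \le 2\|\xi\|_\infty^2 + 2\big(\text{driver }L^1\text{-norm bound}\big)^2$, and again the driver bound splits into the $f(s,0,\ldots)$ part plus a $C^2(T^2+\ldots)\rho^4$ part. Adding the two estimates and using \eqref{20.7.3.1}, one needs the resulting right-hand side $\le\rho^2$; this is exactly what the numerical constant $\rho^2 = \frac{1}{4097C^2(T^2+1)}$ is engineered to guarantee — one should simply chase the constants (the "$4097 = 4096+1$" strongly suggests a clean power-of-two bookkeeping where a factor like $2C\rho$ or $4C^2\rho^2$ appears and the smallness of $\rho$ forces it below a fixed threshold).

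The second main step is the \emph{contraction} property. Take $(U^1,V^1),(U^2,V^2)\in\cB_\rho$ with images $(Y^1,Z^1),(Y^2,Z^2)$, and write $\delta Y = Y^1-Y^2$, $\delta Z = Z^1-Z^2$, $\delta U,\delta V$ similarly. Since the terminal values agree, $\delta Y_t = \dbE_t\int_t^T\big(f(s,U^1_s,\dbE[U^1_s],V^1_s,\dbE[V^1_s]) - f(s,U^2_s,\ldots)\big)ds$. Now invoke the \emph{locally Lipschitz} half of Assumption \ref{3.1}: the difference of drivers is bounded by $C(|U^1|+|\dbE U^1|+|U^2|+|\dbE U^2|+|V^1|+\cdots)\cdot(|\delta U|+|\dbE\delta U|+|\delta V|+|\dbE\delta V|)$, and the first factor is $\le 4C\rho$-ish in the relevant norms because all arguments lie in $\cB_\rho$. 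Cauchy–Schwarz then gives $\|\delta Y\|_{L^\infty}^2 + \|\delta Z\|_{\cZ^2}^2 \le \kappa\,(\|\delta U\|_{L^\infty}^2 + \|\delta V\|_{\cZ^2}^2)$ with $\kappa = C^2(T^2+1)\cdot(\text{const})\cdot\rho^2 < 1$ by the choice of $\rho$. (Here the $\cZ^2$-norm on $\delta V$ is again what lets one convert $\dbE_\t\int_\t^T|V^i_s||\delta V_s|ds \le \|V^i\|_{\cZ^2}\|\delta V\|_{\cZ^2}$ by conditional Cauchy–Schwarz, and similarly one controls $\delta Z$ by reusing the energy identity for the difference equation.) So $\Phi$ is a contraction on the complete metric space $\cB_\rho$ (completeness of $L^\infty_\dbF\times\cZ^2$ was noted via the BMO identification), Banach's theorem yields a unique fixed point, and a fixed point of $\Phi$ is precisely an adapted solution of \eqref{BSDE} in $\cB_\rho$; this simultaneously gives existence and uniqueness \emph{within} $\cB_\rho$.

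The step I expect to be the main obstacle is the self-map estimate — specifically, getting the $\cZ^2$-bound on $Z$ to close. The $Y$-bound is a straightforward conditional-expectation argument, but controlling $\sup_\t\|\dbE_\t\int_\t^T|Z_s|^2ds\|_\infty$ requires the energy/Itô identity together with careful use of the BMO inequalities from Section 2 to turn the quadratic driver term $\int_\t^T|V_s|^2ds$ into something bounded by $\|V\|_{\cZ^2}^2$ rather than blowing up; the quadratic growth in \emph{all} four arguments (not just $Z$) means the $\dbE[Y],\dbE[Z]$ terms must each be Jensen'd down to the sup-norms before the estimates line up, and keeping every constant small enough to fit under $\rho^2$ and under $1$ simultaneously is the delicate bookkeeping that the specific constant $4097C^2(T^2+1)$ encodes.
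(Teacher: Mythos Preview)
Your proposal is correct and follows essentially the same approach as the paper: a contraction-mapping argument on the ball $\cB_\rho$ in $L^\infty_\dbF\times\cZ^2$, with both the self-map and the contraction estimates obtained from It\^o's formula applied to $|Y|^2$ (your ``energy identity'') together with the $\cZ^2$/BMO control of the quadratic driver terms. The only organizational difference is that the paper first treats the reduced equation with driver $f(\cdot,Y,\dbE[Y],Z,\dbE[Z])-f(\cdot,0,0,0,0)$, obtains the fixed point there, and only at the end recovers the original BSDE by absorbing $\int_t^T f(s,0,0,0,0)\,ds$ into a modified terminal value $\tilde\xi$; this cleans up the bookkeeping that produces the constant $4097=8\cdot 512+1$, but is not a substantive departure from what you outline.
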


\begin{proof}
The proof is divided into two steps.

\ms

\textbf{Step 1}.
We firstly consider the existence and uniqueness of the following BSDE
\begin{equation}\label{3.5.0}
  Y_{t} = \xi + \int_t^T \Big(f(s,Y_{s},\dbE[Y_{s}],Z_{s},\dbE[Z_{s}])-f(s,0,0,0,0)\Big) ds - \int_t^TZ_{s}dW_{s}, \q 0\les t\les T.
\end{equation}
In order to solve the above equation, for every $(y,z)\in L_\dbF^\infty(0,T;\dbR^m)\times \cZ^2[0,T]$,
we define the mapping $(Y,Z)=\Gamma(y,z)$ by
\begin{equation}\label{3.5}
  Y_{t} = \xi + \int_t^T \Big(f(s,y_{s},\dbE[y_{s}],z_{s},\dbE[z_{s}])-f(s,0,0,0,0)\Big) ds - \int_t^TZ_{s}dW_{s}, \q 0\les t\les T.
\end{equation}
For \rf{3.5}, using It\^{o}'s formula to $|Y|^2$ on $[t,T]$, we obtain
$$\ba{ll}
\ds|Y_t|^2+\int_t^T|Z_r|^2dr=|\xi|^2
+\int_t^T2Y_r\cdot\Big(f(r,y_{r},\dbE[y_{r}],z_{r},\dbE[z_{r}])-f(r,0,0,0,0)\Big)dr\\
\ns\ns\ds\qq\qq\qq\qq\q\ -2\int_t^TY_r\cdot Z_rdW_r.
\ea$$
Taking the conditional expectation and using the
inequality $2ab\les\frac{1}{2}a^2+2b^2$, we get
\bel{3.6}\ba{ll}
\ds |Y_t|^2+\dbE_{t}\int_t^T|Z_r|^2dr\\
\ns\ds\les \|\xi\|_{\i}^2
+2\|Y\|_{L_\dbF^\infty(0,T)}
\Bigg(\dbE_{t}\int_t^T|f(r,y_{r},\dbE[y_{r}],z_{r},\dbE[z_{r}])-f(r,0,0,0,0)|dr\Bigg)\\
\ns\ds\les\|\xi\|_{\i}^2+\frac{1}{2}\|Y\|_{L_\dbF^\infty(0,T)}^2
+2\Bigg(\dbE_{t}\int_t^T|f(r,y_{r},\dbE[y_{r}],z_{r},\dbE[z_{r}])-f(r,0,0,0,0)|dr\Bigg)^{2}.
\ea\ee
It follows from Jensen's inequality that the last term of \rf{3.6} naturally reduces to
\bel{3.7}\ba{ll}
\ds \dbE_{t}\int_t^T|f(r,y_{r},\dbE[y_{r}],z_{r},\dbE[z_{r}])-f(r,0,0,0,0)|dr\\
\ns\ds\les C\dbE_{t}\int_t^T\Big(|y_r|+|\dbE[y_{r}]|+|z_r|+|\dbE[z_{r}]|\Big)^2dr\\
\ns\ds\les 4C\dbE_{t}\int_t^T\Big(|y_r|^2+|\dbE[y_{r}]|^2+|z_r|^2+|\dbE[z_{r}]|^2\Big)dr\\
\ns\ds\les 4C\bigg[\dbE_{t}\int_t^{T}\big(|y_r|^2+|z_r|^2\big)dr+
\dbE\int_t^{T}\big(|y_r|^2+|z_r|^2\big)dr\bigg].
\ea\ee
Hence, combining \rf{3.6} and \rf{3.7}, we can obtain
$$\ba{ll}
\ds \frac{1}{2}\|Y\|^2_{L_\dbF^\infty(0,T)}+\|Z\|_{\cZ^2[0,T]}^{2}\\
\ns\ds\les
\|\xi\|_{\i}^2+32C^2\esssup_{(t,\o)\in[0,T]\times\Om}
\bigg[\dbE_{t}\int_t^{T}\big(|y_r|^2+|z_r|^2\big)dr+
\dbE\int_t^{T}\big(|y_r|^2+|z_r|^2\big)dr\bigg]^{2}\\
\ns\ds\les\|\xi\|_{\i}^2+64C^2
\bigg(T^2\|y\|_{L_\dbF^\infty(0,T)}^{4}+\|z\|_{\cZ^2[0,T]}^{4}\bigg).
\ea$$
Then, it follows from the elementary inequality $a^2+b^2\les(|a|+|b|)^2$ that
$$\ba{ll}
\ds \|Y\|^{2}_{L_\dbF^\infty(0,T)}+\|Z\|_{\cZ^2[0,T]}^{2}\\
\ns\ds \les4\|\xi\|_{\i}^2
+\b^2\Big(\|y\|_{L_\dbF^\infty(0,T)}^2+\|z\|_{\cZ^2[0,T]}^{2}\Big)^{2},
\ea$$
where $\b\deq 16C\sqrt{T^2+1}$. Now, we would like to pick $R$ such that
$$4\|\xi\|_{\i}^2+ \b^2R^4\les R^2.$$
In fact, the above inequality is solvable if and only if
\bel{19.3.21.1}\|\xi\|_{\i}\les \frac{1}{4\b}.\ee
For example, we can take
$$R=2\sqrt{2}\|\xi\|_{\i}$$
in order to satisfy this quadratic inequality. Then the ball
$$\cB_R\deq\Big\{(Y,Z)\in L_\dbF^\infty(0,T;\dbR^m)\times \cZ^2[0,T]\Bigm|
\|Y\|^2_{L_\dbF^\infty(0,T)}+\|Z\|_{\cZ^2[0,T]}^2\les R^2\Big\}$$
is such that $\G(\cB_R)\subset\cB_R$.

\ms

\textbf{Step 2}. We prove that the mapping $\G$ is a contraction on $\cB_R$.

\ms

For every $(y,z)$, $(\by,\bz)\in\cB_R$, let $(Y,Z)=\G(y,z)$ and
$(\bY,\bZ)=\G(\by,\bz)$. For simplicity of presentation, we denote
$$\hy=y-\by,\q~\hz=z-\bz,\q~\hY=Y-\bY,\q~\hY=Y-\bY.$$
Similar to the above discussion, we deduce that
$$\ba{ll}
\ds\frac{1}{2}\|\hY\|^{2}_{L_\dbF^\infty(0,T)}+\|\hZ\|_{\cZ^2[0,T]}^{2}\\
\ns\ds\les2\esssup_{(t,\o)\in[0,T]\times\Om}\bigg[\dbE_{t}\int_t^T
\Big|f(r,y_{r},\dbE[y_{r}],z_{r},\dbE[z_{r}])
-f(r,\by_{r},\dbE[\by_{r}],\bz_{r},\dbE[\bz_{r}])\Big|dr\bigg]^{2}\\
\ns\ds\les 2C^2\esssup_{(t,\o)\in[0,T]\times\Om}\bigg[\dbE_{t}\int_t^T
\Big(|y_r|+|\by_r|+\dbE[|y_{r}|]+\dbE[|\by_{r}|]+
|z_r|+|\bz_r|+\dbE[|z_{r}|]+\dbE[|\bz_{r}|]\Big)\\
\ns\ds\qq\qq\qq\qq\qq\q\
\cd\Big(|\hy_r|+|\hz_r|+\dbE\big[|\hy_{r}|]+\dbE\big[|\hz_{r}|\big]\Big)dr\bigg]^{2}\\
\ns\ds\les 2C^2\esssup_{(t,\o)\in[0,T]\times\Om}\bigg[\dbE_{t}\int_t^T
\Big(|y_r|+|\by_r|+\dbE[|y_{r}|]+\dbE[|\by_{r}|]+
|z_r|+|\bz_r|+\dbE[|z_{r}|]+\dbE[|\bz_{r}|]\Big)^2dr\\
\ns\ds\qq\qq\qq\qq~
\cd\dbE_{t}\int_t^T\Big(|\hy_r|+|\hz_r|+\dbE\big[|\hy_{r}|]+\dbE\big[|\hz_{r}|\big]\Big)^2dr\bigg]\\
\ns\ds\les 64C^2\esssup_{(t,\o)\in[0,T]\times\Om}\bigg[\dbE_{t}\int_t^T
\Big(|y_r|^2+|\by_r|^2+\dbE[|y_{r}|^2]+\dbE[|\by_{r}|^2]+
|z_r|^2+|\bz_r|^2+\dbE[|z_{r}|^2]+\dbE[|\bz_{r}|^2]\Big)dr\\
\ns\ds\qq\qq\qq\qq~
\cd\dbE_{t}\int_t^T\Big(|\hy_r|^2+|\hz_r|^2+\dbE\big[|\hy_{r}|^2]+\dbE\big[|\hz_{r}|^2\big]\Big)dr\bigg]\\
\ns\ns\ds\les
256C^2(T^2+1)
\Big[\|y\|^{2}_{L_\dbF^\infty(0,T)}+\|z\|^{2}_{\cZ^2[0,T]}+\|\by\|^{2}_{L_\dbF^\infty(0,T)}
+\|\bz\|^{2}_{\cZ^2[0,T]}\Big]\\
\ns\ns\ds\q\ \times\Big(\|\hy\|^{2}_{L_\dbF^\infty(0,T)}+\|\hz\|^{2}_{\cZ^2[0,T]}\Big).
\ea$$
Noting that
$$\|y\|^{2}_{L_\dbF^\infty(0,T)}+\|z\|^{2}_{\cZ^2[0,T]}\les R^2,\qq
\|\by\|^{2}_{L_\dbF^\infty(0,T)}+\|\bz\|^{2}_{\cZ^2[0,T]}\les R^2,$$
we obtain
$$\|\hY\|^{2}_{L_\dbF^\infty(0,T)}+\|\hZ\|_{\cZ^2[0,T]}^{2}
\les MR^2 \big(\|\hy\|^{2}_{L_\dbF^\infty(0,T)}+\|\hz\|^{2}_{\cZ^2[0,T]}\big),$$
where
$${M\deq 512C^2(T^2+1).}$$
Now, note that
${R=2\sqrt{2}\|\xi\|_{\i},}$
we have that if
\bel{19.3.21.2}{\|\xi\|_{\i}^2<\frac{1}{8M},}\ee
then
$${MR^2<1,}$$
which implies that $\G$ is a contraction on $\cB_R$. By the contraction principle, under the condition \rf{19.3.21.2}, the mapping $\G$ admits a unique fixed point, which is the solution of \rf{3.5.0}.

\ms

Finally, we come back to BSDE \rf{BSDE}, which can be rewritten into the form of \rf{3.5.0} as follows:
\begin{equation}\label{3.5.0.2}
  Y_{t} = \widetilde{\xi} + \int_t^T \Big(f(s,Y_{s},\dbE[Y_{s}],Z_{s},\dbE[Z_{s}])-f(s,0,0,0,0)\Big) ds - \int_t^TZ_{s}dW_{s}, \q 0\les t\les T,
\end{equation}
where
$$\widetilde{\xi}\deq\xi+\int_t^Tf(s,0,0,0,0)ds.$$
Note that \rf{19.3.21.2} is stronger than  \rf{19.3.21.1}. So if we define $\rho>0$ by letting
$$\rho^2\deq \frac{1}{4097C^2(T^2+1)},$$
then BSDE \rf{BSDE} admits a unique adapted solution
$(Y,Z)\in\cB_\rho$ under the following condition
\begin{equation}
\|\xi\|_{\infty}
+\bigg\|\int_0^T|f(t,0,0,0,0)|dt\bigg\|_{\i}\les \rho.
\end{equation}
This completes the proof.
\end{proof}

\begin{remark} \rm
Comparing with the results of Carmona and Delarue \cite{CD18}, where they studied some kind of quadratic FBSDE of mean-field type, we would like to show two differences.
\begin{enumerate}[~~\,\rm (i)]
\item The equations of mean-field type are different between this paper and \cite{CD18}.
In \cite{CD18}, motivated by the problem of the mean-field game,
Carmona and Delarue proved the solvability of the following FBSDE with quadratic grwoth:
\begin{equation}\label{equ 3.101}
\left\{
\begin{aligned}
\ds dX_t&=b(t,X_t, \mathcal{L}(X_t) ,\hat{\alpha}(t,X_t,\mathcal{L}(X_t),(\sigma(t,X_t,\mathcal{L}(X_t))^{-1})^\intercal Z_t))dt\\
&\q +\sigma(t,X_t,\mathcal{L}(X_t))dW_t,\\
\ns\ds dY_t&=-f(t,X_t,\mathcal{L}(X_t),\hat{\alpha}(t,X_t,\mathcal{L}(X_t),
(\sigma(t,X_t,\mathcal{L}(X_t))^{-1})^\intercal Z_t ))dt+Z_tdW_t,\\
\ns\ds X_0&=\xi\in L^2(\Omega,\mathcal{F}_0,\mathbb{P};\mathbb{R}^d),\ Y_T=g(X_T,\mathcal{L}(X_T)),
\end{aligned}
\right.
\end{equation}
where  $\mathcal{L}(X_t)$ denotes the law of the process $X_t$.
Comparing the backward equation of \rf{equ 3.101} with  \rf{BSDE}, it is easy to see that the generator $f$ of \rf{equ 3.101} depends on the law of the process $X$, however, the generator $f(\cd)$ of \rf{BSDE} depends on the expectations of $Y$ and $Z$. So the backward equation of \rf{equ 3.101} and  \rf{BSDE} are two different equations of mean-field type.
\item  The circumstances are different. The circumstance of Carmona and Delarue \cite{CD18} is Markovian, however, our model is non-Markovian. Besides, the backward equation of \rf{equ 3.101} studied in \cite{CD18} is a one-dimensional BSDE, however, \rf{BSDE} is a multi-dimensional BSDE.
\end{enumerate}
\end{remark}

\section{Comparison Theorem}

In this section, we study the comparison theorem of mean-field BSDEs with quadratic growth of the following form:
\begin{equation}\label{BSDE2}
  Y_{t} = \xi + \int_t^T f(s,Y_{s},\dbE[Y_{s}],Z_{s}) ds - \int_t^TZ_{s}dW_{s}, \q 0\les t\les T.
\end{equation}
We consider BSDE \rf{BSDE2} in one-dimensional case only, i.e., $m=1$. For simplicity of presentation, we let $d=1$ too. For this situation, we have the following lemma.
\begin{lemma}\label{3.4}
Let $C$ be a positive constant, and suppose that there are two increasing functions
$\l:\dbR^{+}\rightarrow\dbR^{+}$ and $\bar{\l}:\dbR^{+}\rightarrow\dbR^{+}$,
bounded on all bounded subsets, and a predictable process $k\in\cZ^2[0,T]$ such that
for all $s\in[0,T]$, $y,\bar{y},z\in\dbR$,
\begin{align}\label{3.1.2}
  \big|f(s,y,\bar y,z)\big|
\les& k_t^2\big[\l(|y|)+\bar\l(|\bar y|)\big]+Cz^2.
\end{align}
Then, for bounded terminal value $\xi\in L^\i_{\sF_T}(\Om;\dbR)$,
the martingale part of any bounded solution of BSDE \rf{BSDE2} belongs to the space BMO$(\dbP)$, i.e.,  $Z\in\cZ^2[0,T]$.
\end{lemma}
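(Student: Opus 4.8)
The plan is to run the standard BMO-estimate argument for quadratic BSDEs (cf. Kobylanski, Briand--Hu), adapted to the mean-field term $\dbE[Y_s]$ which, since $\xi$ and the solution are assumed bounded, will be under control a priori. Let $(Y,Z)$ be a bounded solution, so that there is a constant $K$ with $\|Y\|_{L^\infty_\dbF(0,T)}\les K$ and hence also $|\dbE[Y_s]|\les K$ for all $s$; set $\a\deq\l(K)+\bar\l(K)$, which is finite because $\l,\bar\l$ are bounded on bounded subsets, and note $\|k\|^2_{\cZ^2[0,T]}<\i$ by hypothesis. The target is to show $\sup_{\t}\big\|\dbE_\t\int_\t^T|Z_s|^2ds\big\|_\i<\i$.

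First I would introduce an exponential weight to absorb the quadratic term. Fix $\gamma>0$ (to be chosen, e.g. $\gamma=2C$ or slightly larger) and apply It\^o's formula to $\phi(Y_t)$ with $\phi(x)=\frac{1}{\gamma^2}\big(e^{\gamma x}-1-\gamma x\big)$ on $[\t,T]$ for an arbitrary stopping time $\t\in\sT[0,T]$. Since $\phi''(x)=e^{\gamma x}$ and $\phi'$ is bounded on $[-K,K]$, this yields, after taking $\dbE_\t$ and using that the stochastic integral is a true martingale (the integrand is bounded times $Z\in L^2$),
\begin{equation*}
\dbE_\t\int_\t^T e^{\gamma Y_s}|Z_s|^2\,ds \les C_1 + \dbE_\t\int_\t^T \big(2\phi'(Y_s)+|Z_s|^2 e^{\gamma Y_s}\big)\,ds - \dbE_\t\int_\t^T 2\phi'(Y_s)f(s,Y_s,\dbE[Y_s],Z_s)\,ds,
\end{equation*}
where $C_1$ bounds $\phi(Y_\t)-\phi(\xi)$. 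Wait --- more carefully: the It\^o expansion of $-d\phi(Y_t)$ produces $\phi'(Y_t)f(\cdot)dt-\tfrac12\phi''(Y_t)|Z_t|^2dt-\phi'(Y_t)Z_tdW_t$, so rearranging gives $\tfrac12\dbE_\t\int_\t^T e^{\gamma Y_s}|Z_s|^2ds = \dbE_\t[\phi(Y_T)]-\phi(Y_\t)+\dbE_\t\int_\t^T\phi'(Y_s)f(s,Y_s,\dbE[Y_s],Z_s)ds$. Now bound the last integrand using \rf{3.1.2}: $|\phi'(Y_s)f(\cdot)|\les |\phi'(Y_s)|k_s^2\a + C|\phi'(Y_s)||Z_s|^2$. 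Choosing $\gamma$ large enough that $C\sup_{|x|\les K}|\phi'(x)| = C\cdot\tfrac{1}{\gamma}(e^{\gamma K}-e^{-\gamma K})\cdot$... hmm, that grows in $\gamma$; the correct classical trick is instead $\phi(x)=\tfrac{1}{2C^2}(e^{2C|x|}-2C|x|-1)$ so that $\tfrac12\phi''-C\phi' = \tfrac12$ exactly, killing the $C|Z|^2$ term and leaving a clean $\tfrac12\int|Z|^2$ on the left. With that choice one gets
\begin{equation*}
\tfrac12\,\dbE_\t\int_\t^T|Z_s|^2\,ds \;\les\; \|\phi\|_{L^\infty[-K,K]} + \|\phi'\|_{L^\infty[-K,K]}\,\a\;\dbE_\t\int_\t^T k_s^2\,ds \;\les\; C_2 + C_3\,\|k\|^2_{\cZ^2[0,T]},
\end{equation*}
a bound uniform in $\t$. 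Taking the supremum over $\t$ and the essential supremum in $\o$ gives $\|Z\|^2_{\cZ^2[0,T]}<\i$, i.e. $Z\cd W\in BMO(\dbP)$, which is the claim.

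The main obstacle is purely bookkeeping around the mean-field term and the nonsmoothness of $|x|$: one must verify that $\dbE[Y_s]$ contributes only the harmless bounded factor $\bar\l(|\dbE[Y_s]|)\les\bar\l(K)$ (this is immediate once boundedness of $Y$ is granted, which is part of the hypothesis "bounded solution"), and one should either smooth $|x|$ near $0$ or work with the $C^2$ function $\phi$ above --- I would use $\phi\in C^2$ with $\phi(0)=\phi'(0)=0$, $\phi''(x)=e^{2C|x|}$ reconstructed by integration, so $\phi$ is genuinely $C^2$ and the It\^o formula applies directly. A minor point to check is that the local martingale $\int\phi'(Y_s)Z_sdW_s$ is a true martingale under $\dbE_\t$: since $\phi'(Y)$ is bounded (by $K$-bound on $Y$) and $Z\in L^2_\dbF(0,T;\dbR)$, its quadratic variation has finite expectation, so this is fine. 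No contraction or fixed-point machinery is needed here --- it is a single a priori estimate.
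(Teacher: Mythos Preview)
Your argument is correct and follows essentially the same route as the paper: apply It\^o's formula with an exponential weight, use the growth bound \rf{3.1.2} together with the a~priori bound $|Y_s|,|\dbE[Y_s]|\les K$ to control the $k_s^2$ term, and absorb the $C|Z_s|^2$ term into the second-order part, then take conditional expectations over stopping times. The only cosmetic difference is the choice of weight: the paper applies It\^o directly to $e^{\beta Y_s}$ with $\beta=4C$ (one-sided, then uses $Y\ges -M$ at the end to pull $e^{\beta Y_s}$ out of the $Z^2$-integral), whereas you use the two-sided Kobylanski function $\phi(x)=\tfrac{1}{(2C)^2}(e^{2C|x|}-2C|x|-1)$; note that your identity should read $\tfrac12\phi''-C|\phi'|=1$ rather than $\tfrac12$, but this is immaterial.
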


\begin{proof}
Let $Y$ be a solution of BSDE \rf{BSDE2} and there be a positive constant $M$ such that
$$Y_t\les M,\q\hb{ a.s. for all } t\in[0,T]. $$
So we have that $\|\xi\|_{\i}\les M$. Applying It\^{o} formula to $\exp\{\b Y_s\}$ on $[\t,T]$, we have
\begin{align*}
  \frac{\b^2}{2}\int_{\t}^Te^{\b Y_s}Z^2_sds-\b\int_{\t}^Te^{\b Y_s}f(s,Y_s,\dbE[Y_s],Z_s)ds
  +\b\int_{\t}^Te^{\b Y_s}Z_sdW_s=e^{\b\xi}-e^{\b T_{\t}}\les e^{\b M},
\end{align*}
or
\begin{align*}
  \frac{\b^2}{2}\int_{\t}^Te^{\b Y_s}Z^2_sds
  +\b\int_{\t}^Te^{\b Y_s}Z_sdW_s\les e^{\b M}+\b\int_{\t}^Te^{\b Y_s}f(s,Y_s,\dbE[Y_s],Z_s)ds,
\end{align*}
where $\b$ is a constant which will be determined later. Now, if $Z\cdot W$ is square integrable martingale, then taking conditional expectations on the above inequality, we obtain that
\begin{align*}
  \frac{\b^2}{2}\dbE_{\t}\int_{\t}^Te^{\b Y_s}Z^2_sds
  \les e^{\b M}+\b\dbE_{\t}\int_{\t}^Te^{\b Y_s}f(s,Y_s,\dbE[Y_s],Z_s)ds.
\end{align*}
Using the estimate \rf{3.1.2} we obtain that
\begin{align*}
  \frac{\b^2}{2}\dbE_{\t}\int_{\t}^Te^{\b Y_s}Z^2_sds
  \les e^{\b M}+\b[\l(M)+\bar\l(M)]\dbE_{\t}\int_{\t}^Te^{\b Y_s}k_s^2ds
  +\b C\dbE_{\t}\int_{\t}^Te^{\b Y_s}|Z_s|^2ds.
\end{align*}
Thus we have
\begin{align*}
  [\frac{\b^2}{2}-\b C] \dbE_{\t}\int_{\t}^Te^{\b Y_s}Z^2_sds
  \les e^{\b M}+\b[\l(M)+\bar\l(M)]\dbE_{\t}\int_{\t}^Te^{\b Y_s}k_s^2ds.
\end{align*}
Taking $\b=4C$, we deduce that
\begin{align*}
  4C^2\dbE_{\t}\int_{\t}^Te^{4C Y_s}Z^2_sds
  \les& e^{4C M}+4C[\l(M)+\bar\l(M)]\dbE_{\t}\int_{\t}^Te^{4C Y_s}k_s^2ds\\
  \les& e^{4C M}\Big(1+4C[\l(M)+\bar\l(M)]\|k\|_{\cZ^2[0,T]}\Big).
\end{align*}
Note that $-M\les Y\les M$, from the latter inequality we deduce that for any stopping time $\t$,
\begin{align*}
  \dbE_{\t}\int_{\t}^TZ^2_sds
  \les \frac{e^{8C M}\Big(1+4C[\l(M)+\bar\l(M)]\|k\|_{\cZ^2[0,T]}\Big)}{4C^2}.
\end{align*}
Hence $Z\cdot W\in$ BMO, i.e., $Z\in\cZ^2[0,T]$. This completes the proof.
\end{proof}

For simplicity of presentation, in the following we introduce some more notations. Let $(Y,Z)$ and $(\tilde{Y},\tilde Z)$ be two pairs of processes, and the coefficients $(f,\xi)$ and  $(\tilde f,\tilde \xi)$ be two pairs of generators. We define
\begin{align*}
   &\qq\qq\qq \d f= f-\tilde f,\qq \d \xi=\xi-\tilde \xi,\\
   \d_y f(t)\equiv&\  \d_yf(t,Y_t,\tilde Y_t,\dbE[Y_t],Z_t)
  =\frac{f(t,Y_t,\dbE[Y_t],Z_t)-f(t,\tilde Y_t,\dbE[Y_t],Z_t)}{Y_t-\tilde Y_t},\\
   \d_{\bar y} f(t)\equiv&\  \d_{\bar{y}}f(t,\tilde Y_t,\dbE[Y_t],\dbE[\tilde Y_t],Z_t)
  =\frac{f(t,\tilde Y_t,\dbE[Y_t],Z_t)-f(t,\tilde Y_t,\dbE[\tilde Y_t],Z_t)}{\dbE[Y_t]-\dbE[\tilde Y_t]},\\
  \d_{z} f(t) \equiv&\ \d_{z}f(t,\tilde Y_t,\dbE[\tilde Y_t],Z_t,\tilde Z_t)
  =\frac{f(t,\tilde Y_t,\dbE[\tilde Y_t],Z_t)-f(t,\tilde Y_t,\dbE[\tilde Y_t],\tilde Z_t)}{Z_t-\tilde Z_t},
\end{align*}
and $\d Y$ and $\d Z$ could be defined similarly. Then we have
\begin{align}\label{3.5.1}
  f(t,Y_t,\dbE[Y_t],Z_t)-f(t,\tilde Y_t,\dbE[\tilde Y_t],\tilde Z_t)
  =\d_yf(t)\d Y_t+\d_{\bar y}f(t)\dbE[\d Y_t]+\d_z f(t)\d Z_t.
\end{align}

For one dimensional BSDE \rf{BSDE2}, we have the following comparison theorem.

\begin{theorem}\label{3.6.21}
Let $(Y,Z)$ and $(\tilde Y,\tilde Z)$ be the bounded solutions of BSDE \rf{BSDE2} with generators $(f,\xi)$ and $(\tilde f,\tilde \xi)$ respectively, which satisfy the condition of Lemma \ref{3.4}. Moreover, if
$\xi\les \tilde\xi$ a.s., $f(t,y,\bar y,z)\les\tilde f(t,y,\bar y,z)$ a.e., and the following  conditions hold:
\begin{align*}
\ds   &\hb{{\rm(A1)} for every }Y,\tilde Y,Z, \ \
  \d_y f(t), \d_{\bar y} f(t)  \in L_\dbF^\infty(0,T;\dbR),\\
\ns\ds  &\hb{{\rm(A2)} for every }Z,\tilde Z\in\cZ^2[0,T], \hb{ and any bounded process }Y,\q \d_{z} f(t) \in\cZ^2[0,T],\\
\ns\ds &\hb{{\rm(A3)} one of the two coefficients $f$ and $\tilde f$ is nondecreasing in } \bar y,
\end{align*}
then we have $Y_t\les\tilde Y_t$ a.s. for every $t\in[0,T]$.
\end{theorem}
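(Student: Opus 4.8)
The plan is to linearize the difference of the two BSDEs, write $\d Y$ as the solution of a linear mean-field BSDE driven by the bounded coefficients $\d_y f$, $\d_{\bar y}f$ and the BMO coefficient $\d_z f$, and then exhibit the sign of $\d Y$ via an explicit representation using a suitable exponential (Girsanov-type) change of measure together with a deterministic integrating factor. Concretely, set $\d Y = Y - \tilde Y$, $\d Z = Z - \tilde Z$, and use the decomposition \rf{3.5.1} to get, with $\d f(t) := f(t,\tilde Y_t,\dbE[\tilde Y_t],\tilde Z_t) - \tilde f(t,\tilde Y_t,\dbE[\tilde Y_t],\tilde Z_t) \ges 0$ a.e. (this is where $f\les\tilde f$ and (A3) are used — one applies the comparison of $f,\tilde f$ at the triple $(\tilde Y_t,\dbE[\tilde Y_t],\tilde Z_t)$, and the monotonicity of whichever of $f,\tilde f$ is nondecreasing in $\bar y$ handles the order of the two expectation slots consistently),
\begin{align*}
  \d Y_t = \d\xi + \int_t^T \Big(\d_y f(s)\,\d Y_s + \d_{\bar y} f(s)\,\dbE[\d Y_s] + \d_z f(s)\,\d Z_s + \d f(s)\Big)ds - \int_t^T \d Z_s\, dW_s.
\end{align*}
By (A2) the process $\d_z f \in \cZ^2[0,T]$, so $\int_0^\cdot \d_z f(s)\,dW_s$ is a BMO-martingale; hence by the John--Nirenberg inequality its stochastic exponential $\cE(\int_0^\cdot \d_z f\,dW)$ is a uniformly integrable martingale and defines an equivalent measure $\dbQ$ under which $\wt W_t := W_t - \int_0^t \d_z f(s)\,ds$ is a Brownian motion.

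Under $\dbQ$ the equation becomes $\d Y_t = \d\xi + \int_t^T(\d_y f(s)\,\d Y_s + \d_{\bar y}f(s)\,\dbE[\d Y_s] + \d f(s))ds - \int_t^T \d Z_s\,d\wt W_s$. Next I would absorb the $\d_y f$ term by the deterministic-exponent integrating factor $\G_t := \exp\{\int_0^t \d_y f(s)\,ds\}$ (bounded above and below by (A1)), obtaining for $U_t := \G_t\,\d Y_t$ a linear mean-field BSDE whose only remaining "bad" term is the mean-field term $\G_t\,\d_{\bar y}f(t)\,\dbE[\d Y_t]$ and the nonnegative forcing $\G_t\,\d f(t)$. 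Taking $\dbQ$-conditional expectation given $\sF_t$ and using that the $d\wt W$-integral is a $\dbQ$-martingale,
\begin{align*}
  U_t = \dbE^{\dbQ}_t\Big[\G_T\,\d\xi + \int_t^T \G_s\big(\d_{\bar y}f(s)\,\dbE[\d Y_s] + \d f(s)\big)ds\Big].
\end{align*}
Here $\dbE[\d Y_s]$ is a deterministic number, not adapted, so I cannot immediately conclude; instead I would take a further (unconditional, but now $\dbP$ — or equivalently handle the change of measure carefully on the deterministic term) expectation to get a closed scalar inequality for $u_t := \dbE[\d Y_t]$ (or better, a Gronwall-type estimate), then feed positivity back in. The cleanest route: first show $\dbE[\d Y_t]$ satisfies a linear scalar ODE-type inequality forced by the nonnegative $\dbE[\G_s \d f(s)/\G_t]$-type terms and by itself with a bounded (by $\|\d_{\bar y}f\|_\infty$) coefficient, conclude $\dbE[\d Y_t]\les 0$ for all $t$ by Gronwall, and then return to the conditional representation above: with $\dbE[\d Y_s]\les 0$, $\d f(s)\ges 0$, $\G>0$ and $\d\xi\les 0$, every term on the right is $\les 0$, whence $U_t\les 0$, i.e. $\d Y_t\les 0$ a.s.

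**The main obstacle** I expect is the interplay between the mean-field term and the change of measure: $\dbE[\d Y_s]$ is an expectation under the original measure $\dbP$, but the natural representation of $\d Y_t$ produced by the linear-BSDE machinery lives under $\dbQ$, and one must be careful that the Girsanov density (which depends on $\d_z f$, hence on the solutions) is genuinely a true martingale — this is exactly where the BMO hypothesis from Lemma \ref{3.4} and assumption (A2) are essential, via the reverse Hölder / John--Nirenberg property of BMO-martingales. A secondary delicate point is to set up the two-layer expectation argument (conditional $\dbE_t$ for pointwise positivity, plus a scalar Gronwall for $\dbE[\d Y_t]$) in the right order so that the mean-field coupling does not create circularity; handling it by first proving $\dbE[\d Y_t]\les 0$ as a scalar fixed-point/Gronwall statement and only then deducing the pathwise inequality is the key structural idea. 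The remaining steps — the linearization identities, boundedness of $\G$, and the BMO estimates — are routine given Lemma \ref{3.4} and (A1)--(A2).
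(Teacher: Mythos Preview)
Your overall strategy --- linearize the difference, apply a Girsanov change of measure via the BMO coefficient $\d_z f$, absorb $\d_y f$ by an exponential integrating factor, obtain a representation for $\d Y_t$ under $\dbQ$, and close with a Gronwall-type estimate on the mean-field term --- is exactly the paper's route. Two slips, however, need fixing. First, the sign of $\d f$ is backwards: since $f\les\tilde f$ pointwise, one has $\d f(t)=f(t,\tilde Y_t,\dbE[\tilde Y_t],\tilde Z_t)-\tilde f(t,\tilde Y_t,\dbE[\tilde Y_t],\tilde Z_t)\les 0$, not $\ges 0$; with the correct sign this term contributes nonpositively to the representation and your sentence ``every term on the right is $\les 0$'' becomes coherent rather than self-contradictory. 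Second, (A3) does not enter in controlling $\d f$ --- that sign follows directly from $f\les\tilde f$ evaluated at a single point --- but is instead what guarantees $\d_{\bar y}f\ges 0$ (when, say, $f$ is the nondecreasing one), and this nonnegativity is precisely what lets you bound $\d_{\bar y}f(s)\,\dbE[\d Y_s]$ from above by $K\,\dbE[(\d Y_s)^+]$ in the Gronwall step. (Minor: $\G_t=\exp\{\int_0^t\d_y f(s)\,ds\}$ is not ``deterministic'', since $\d_y f$ depends on the random processes $Y,\tilde Y,Z$; it is merely bounded above and below by (A1), which is all you actually use.)

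On the last step, the paper avoids your two-layer scheme: rather than first proving $\dbE[\d Y_t]\les 0$ and then feeding back, it passes directly to $(\d Y_t)^+\les K\int_t^T\dbE[(\d Y_s)^+]\,ds$, takes a single $\dbP$-expectation, and applies Gronwall to get $\dbE[(\d Y_t)^+]=0$, hence $\d Y_t\les 0$ a.s. This is more economical and also dissolves your worry about mixing $\dbP$- and $\dbQ$-expectations: because $\dbE[\d Y_s]$ is a deterministic number it pulls straight out of $\dbE^{\dbQ}_t$, leaving only a nonnegative bounded random coefficient (this is where $\d_{\bar y}f\ges 0$ from (A3) is used), so no separate closing of a scalar equation for $\dbE[\d Y_t]$ is needed.
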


\begin{remark}
The conditions {\rm (A1)} and {\rm (A2)} hold if there exists a positive constant $C$ such that
\begin{align*}
  |f(t,y_1,\bar y_1,z_1)-f(t,y_2,\bar y_2,z_2)|\les C(|y_1-y_2|+|\bar y_1-\bar y_2|)+C(\tb{1+}|z_1|+|z_2|)|z_1-z_2|
\end{align*}
for all $t\in[0,T]$ and $y_1,\bar y_1,y_2,\bar y_2,z_1,z_2\in\dbR$. Moreover, {\rm (A1)} and {\rm (A2)} hold too if $f(t,y,\bar y,z)$ satisfies the global Lipschitz condition.
\end{remark}

\begin{proof}[Proof of Theorem \ref{3.6.21}]
Without lose of generality, we would like to let that $f$ is nondecreasing in $\bar y$.
Taking the difference of BSDE \rf{BSDE2} with coefficients $(f,\xi)$ and $(\tilde f,\tilde\xi)$ respectively, we obtain that
\begin{align}\label{3.6.2}
  Y_t-\tilde Y_t=&\ Y_0-\tilde Y_0-\int_0^t[f(s,Y_s,\dbE[Y_s],Z_s)-f(s,\tilde Y_s,\dbE[\tilde Y_s],\tilde  Z_s)]ds\\
  &\ -\int_0^t[f(s,\tilde Y_s,\dbE[\tilde Y_s],\tilde Z_s)-\tilde f(s,\tilde Y_s,\dbE[\tilde Y_s],\tilde Z_s)]ds
  +\int_0^t(Z_s-\tilde Z_s)dW_s.
\end{align}
We define the measure $\dbQ$ by $d \dbQ=\mathcal{E}_T(\L)d\dbP$, where
\begin{align*}
   \L_t=\int_0^t \d_{z} f(t) dW_s.
\end{align*}
By Lemma \ref{3.4}, we have $Z,\tilde Z\in\cZ^2[0,T]$. So the conditions  {\rm (A1)} and {\rm (A2)} imply that $\L\in\hb{\rm BMO}$ and hence $\dbQ$ is a probability measure equivalent to $\dbP$.

\ms

We denote by $\bar\L$ the martingale part of $\d Y=Y-\tilde Y$, in other words,
\begin{align*}
 \bar\L_t=\int_0^t(Z_s-\tilde Z_s)dW_s,\q~t\in[0,T].
\end{align*}
Therefore, on the one hand, from \rf{3.5.1}, we have that the process
\begin{align*}
&\d Y_t+\int_0^t[f(s,Y_s,\dbE[Y_s],Z_s)-f(s,\tilde Y_s,\dbE[\tilde Y_s],\tilde  Z_s)]ds\\
&\q\ +\int_0^t[f(s,\tilde Y_s,\dbE[\tilde Y_s],\tilde Z_s)-\tilde f(s,\tilde Y_s,\dbE[\tilde Y_s],\tilde Z_s)]ds\\
  &= \d Y_t+\int_0^t\Big(\d_y f(s)\d Y_s+\d_{\bar y}f(s)\dbE[\d Y_s]+\d_zf(s)\d Z_s \Big)ds
  +\int_0^t\d f(s,\tilde Y_s,\dbE[\tilde Y_s],\tilde Z_s)ds\\
  & = \d Y_t+\int_0^t\Big(\d_y f(s)\d Y_s+\d_{\bar y}f(s)\dbE[\d Y_s]
  +\d f(s,\tilde Y_s,\dbE[\tilde Y_s],\tilde Z_s)\Big)ds
  +\int_0^t\d_zf(s)\d Z_sds.
\end{align*}
On the other hand, from \rf{3.6.2}, the process
\begin{align*}
&\d Y_t+\int_0^t[f(s,Y_s,\dbE[Y_s],Z_s)-f(s,\tilde Y_s,\dbE[\tilde Y_s],\tilde  Z_s)]ds\\
&\q\ +\int_0^t[f(s,\tilde Y_s,\dbE[\tilde Y_s],\tilde Z_s)-\tilde f(s,\tilde Y_s,\dbE[\tilde Y_s],\tilde Z_s)]ds\\
  &=\d Y_0+ \int_0^t(Z_s-\tilde Z_s)dW_s.
\end{align*}
So, by Girsanov's theorem, noting $\d Y_0\in\dbR$, we obtain that the process
\begin{align*}
&\d Y_t+\int_0^t\Big(\d_y f(s)\d Y_s+\d_{\bar y}f(s)\dbE[\d Y_s]
  +\d f(s,\tilde Y_s,\dbE[\tilde Y_s],\tilde Z_s)\Big)ds\\
  &=\d Y_0+ \int_0^t(Z_s-\tilde Z_s)dW_s-\int_0^t\d_zf(s)\d Z_sds\\
  &=\d Y_0+\bar\L_t-\langle\L,\bar\L\rangle_t
\end{align*}
is a local martingale under the measure $\dbQ$. Moreover, Proposition 11 of \cite{Doleans-Meyer79} implies that
\begin{align*}
\bar\L_t-\langle\L,\bar\L\rangle_t\in\hb{BMO}(\dbQ).
\end{align*}
Finally, using the martingale property, the duality principle between SDEs and BSDEs, and the boundary conditions $Y_T=\xi$, $\tilde Y_t=\tilde\xi$, we have that
\begin{align*}
Y_t-\tilde Y_t= \dbE^{\dbQ}_t\bigg(&\ e^{\int_t^T\d_yf(s)ds}(\xi-\tilde\xi)
+\int_t^Te^{\int_t^s\d_{y}f(r)dr}\d_{\bar y}f(s)\dbE[Y_s-\tilde Y_s]ds\\
&+\int_t^Te^{\int_t^s\d_yf(r)dr}\big[f(s,\tilde Y_s,\dbE[\tilde Y_s],\tilde Z_s)
-\tilde f(s,\tilde Y_s,\dbE[\tilde Y_s],\tilde Z_s)\big]ds\bigg),
\end{align*}
which, noting $\xi\les \tilde\xi$ a.s. and $f(t,y,z)\les\tilde f(t,y,z)$ a.e., implies that
\begin{align*}
Y_t-\tilde Y_t\les&\ \dbE^{\dbQ}_t\int_t^Te^{\int_t^s\d_{y}f(r)dr}\d_{\bar y}f(s)\dbE[Y_s-\tilde Y_s]ds\\
=&\ \int_t^T \dbE^{\dbQ}_t\Big(e^{\int_t^s\d_{y}f(r)dr}\d_{\bar y}f(s)\Big)\dbE[Y_s-\tilde Y_s]ds.
\end{align*}
%
%
Since $\d_y f(t)$ and $\d_{\bar y} f(t)$ are bounded processes, there exists a positive constant $K$ such that
$$\Big\|e^{\int_t^s\d_{y}f(r)dr}\d_{\bar y}f(s)\Big\|_{L_\dbF^\infty(0,T)}\les K.$$
Hence,
\begin{align*}
Y_t-\tilde Y_t\les&\ K\int_t^T\dbE[Y_s-\tilde Y_s]ds.
\end{align*}
Notice that $\mathbb{E}[Y_s-\tilde Y_s]\leq \mathbb{E}[(Y_s-\tilde Y_s)^+]$ and then
$$\int_t^T\dbE[Y_s-\tilde Y_s]ds\leq \int_t^T\dbE[(Y_s-\tilde Y_s)^+]ds,$$
which implies that
$$
\Big(\int_t^T\dbE[Y_s-\tilde Y_s]ds\Big)^+\leq \Big(\int_t^T\dbE[(Y_s-\tilde Y_s)^+]ds\Big)^+=\int_t^T\dbE[(Y_s-\tilde Y_s)^+]ds.
$$
Note the inequality $(ab)^{+}\les a\cd b^{+}$ when $a\ges 0$, it follows
\begin{align*}
(Y_t-\tilde Y_t)^+\les&\ K\Big(\int_t^T\dbE[Y_s-\tilde Y_s]ds\Big)^+\leq K\int_t^T\dbE[(Y_s-\tilde Y_s)^+]ds, \q~ t\in[0,T],
\end{align*}
%
%
from which we can conclude with the help of Gronwall's lemma that $Y_t\les\tilde Y_t$, $t\in[0,T]$,  $\dbP$-a.s.
\end{proof}

\begin{remark}\rm
We point out that the generator $f$ here is of quadratic growth with respect to $z$, which is weaker than that of \cite{Buckdahn2} which is of linear growth with respect to $z$. On the other hand, the terminal value $\xi$ is bounded in our paper, which is stronger than that of the terminal value $\xi$ is in $L^2_{\mathbb{F}}(t,T;\mathbb{R})$ used in Buckdahn, Li and Peng \cite{Buckdahn2}.
\end{remark}

\section{Conclusion}

The multi-dimensional BSDEs with quadratic growth is a difficult yet important topic in the field of BSDEs.
In this work, using an artful method to construct the contracting mapping principle, we proved the existence and uniqueness of  multi-dimensional mean-field BSDEs with quadratic growth under a small terminal value. Moreover, a comparison theorem is obtained. For the general mean-field BSDEs (see Carmona and Delarue \cite{Carmona-Delarue-AP2015,CD18}) with quadratic growth, the relevant results are under our investigation.

%
%
%

\end{document}